\newtheorem{theorem}{Theorem}[section]
\newtheorem{definition}[theorem]{Definition}
\newtheorem{example}[theorem]{Example}
\newtheorem{remark}[theorem]{Remark}
\newtheorem{lemma}[theorem]{Lemma}
\newtheorem{conj}[theorem]{Conjecture}
\title{Computational progress on the unfair 0-1 polynomial Conjecture}
\author{Kevin G. Hare}
\email{kghare@uwaterloo.ca}
\address{Department of Pure Mathematics, University of Waterloo, Waterloo, Ontario, Canada N2L 3G1}
\thanks{Research of K.G. Hare was supported in part by NSERC Grant 2019-03930}
\begin{document}

\begin{abstract}
Let $c(x)$ be a monic integer polynomial with coefficients $0$ or $1$.
Write $c(x) = a(x) b(x)$ where $a(x)$ and $b(x)$ are monic polynomials with non-negative real (not necessarily integer) coefficients.
The unfair 0--1 polynomial conjecture states that $a(x)$ and $b(x)$ are necessarily integer polynomials with coefficients $0$ or $1$.
Let $a(x)$ be a candidate factor of a (currently unknown) 0--1 polynomial.  We will assume that we know if a coefficient is $0$, $1$ or strictly between $0$ and $1$, but that we do not know the precise value of non-integer coefficients.  Given this candidate $a(x)$, this paper gives an algorithm to either find a $b(x)$ and $c(x)$ with $a(x) b(x) = c(x)$ such that $b(x)$ has non-negative real coefficients and $c(x)$ has coefficients $0$ or $1$, or (often) shows that no such $c(x)$ and $b(x)$  exist.
Using this algorithm, we consider all candidate factors with degree less than or equal to 15.
With the exception of 975 candidate factors (out of a possible 7141686 cases), this algorithm shows that there do not exist $b(x)$ with non-negative real coefficients and $c(x)$ with coefficients $0$ or $1$ such that $a(x) b(x) = c(x)$.
\end{abstract}

\maketitle

\section{Introduction}
\label{sec:intro}

Let $X$ and $Y$ be independent discrete random variables with finite support.  
Then $Z = X + Y$ is also a discrete random variable with finite support.
It is conjectured that if $Z$ is uniform on its support, then $X$ and $Y$ must also be uniform on their support.
To the author's knowledge, this was first asked by G. Letac in 1969.

This conjecture can be translated to a conjecture about polynomial multiplication in the following way.
Let $c(x)$ be a monic integer polynomial with coefficients $0$ or $1$.
Factor $c(x)$ as $c(x) = a(x) b(x)$ where $a(x)$ and $b(x)$ monic polynomials with non-negative real coefficients.
Is it true that $a(x)$ and $b(x)$ are necessarily integer polynomials with coefficients $0$ or $1$?
This is clearly not true if we relax the restriction that $a(x)$ and $b(x)$ are monic.  
Simply take $a(x) = 2$ and $b(x) = \frac{1}{2} c(x)$ as an example.
Further, this is not true if we relax the restriction on the coefficients being non-negative real.
Take for example $x^3+1 = (x+1)(x^2-x+1)$ or $x^2+1 = (x+i)(x-i)$.
We will say that a 0--1 polynomial $c(x)$ is {\em fair} if all factorizations $c(x) = a(x) b(x)$ with 
    $a(x)$, $b(x)$ monic polynomials with real non-negative coefficients have the property that  $a(x)$ and $b(x)$ are 0--1 polynomials.
We say that a 0--1 polynomial $c(x)$ is {\em unfair} if there exists a factorization $c(x) = a(x) b(x)$ with $a(x), b(x)$ 
    both monic with real non-negative coefficients, and at least one of $a(x)$ or $b(x)$ has a non-integer coefficient.
The unfair 0--1 polynomial conjecture is that there does not exist an unfair polynomial.
See \cite{Ghidelli} and the expansive references therein for more details.

The goal of this paper is to provide a classification for those $a(x)$ of degree at most 15 which are not factors of an unfair polynomial.

Let $a(x) = a_0 + a_1 x + \dots + a_k x^k$ be a potential factor of an 
    unfair polynomial.
We further assume that the shape of $a(x) = a_0 + a_1 x + \dots + a_k x^k$ is given.
By this, we mean that it is known if $a_i$ is $0, 1$ or strictly between $0$ and $1$.
We wish to determine if $a(x)$ is actually a factor of an unfair 
    polynomial.
That is, we wish to determine if there exists a $b(x) = b_0 + b_1 x + \dots b_n x^n$ with non-negative real coefficients such that $a(x) b(x) =: c(x)$, with $c(x)$ a 0--1 polynomial.

We use two different algorithms for studying this problem, and substantial computational verification. 

We first state a surprisingly useful, although simple result.

\begin{theorem}
\label{thm:reverse}
The polynomial $a(x) = a_0 + a_1 x + \dots + a_k x^k$ is a factor of an unfair polynomial
   if and only if $a^*(x) = a_k + a_{k-1} x + \dots a_0 x^k$ is a factor of an unfair polynomial.
\end{theorem}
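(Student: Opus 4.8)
The plan is to use the \emph{reversal} operator on polynomials. For a nonzero real polynomial $p(x)=p_0+p_1x+\dots+p_dx^d$ with $p_d\neq 0$, set $p^{\,*}(x):=x^{d}p(1/x)=p_d+p_{d-1}x+\dots+p_0x^{d}$; when $p$ is monic this is the operation $a\mapsto a^*$ of the statement, and $(p^{\,*})^{\,*}=p$ whenever $p(0)\neq 0$. Three elementary facts carry the proof: (i) reversal is multiplicative, since $\deg(pq)=\deg p+\deg q$ forces $(pq)^{\,*}(x)=x^{\deg p+\deg q}p(1/x)q(1/x)=p^{\,*}(x)\,q^{\,*}(x)$; (ii) $p$ and $p^{\,*}$ have the same multiset of coefficients, so each of the properties ``all coefficients in $\{0,1\}$'', ``all coefficients non-negative'', and ``some coefficient non-integer'' holds for $p$ iff it holds for $p^{\,*}$; and (iii) the leading coefficient of $p^{\,*}$ equals $p(0)$, so $p^{\,*}$ is monic iff $p(0)=1$.

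First I would reduce to the case where $a$ and the witnessing cofactor and product all have nonzero constant term. Writing $a(x)=x^{s}\widetilde a(x)$ with $\widetilde a(0)\neq 0$, in any identity $c=ab$ with $c$ a $0$--$1$ polynomial and $b$ monic with non-negative coefficients, pulling the maximal power of $x$ out of $b$ and of $c$ shows at once that $\widetilde a$ is a factor of a $0$--$1$ polynomial via a monic non-negative cofactor, with a non-integer coefficient surviving on the side that had one; conversely such an identity for $\widetilde a$, multiplied through by $x^{s}$, gives one for $a$. Hence $a$ is a factor of an unfair polynomial iff $\widetilde a$ is, and since $a^{\,*}=\widetilde a^{\,*}$ the same equivalence holds for the reversed polynomials; so I may assume $a(0)\neq 0$, and likewise that the witnesses $b$ and $c$ have nonzero constant term.

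Now suppose $a$ is a factor of an unfair polynomial, witnessed by $c=ab$ with $c$ a $0$--$1$ polynomial, $b$ monic with non-negative coefficients, and $a$ or $b$ non-integer; by the reduction $c(0)\neq 0$, so $c(0)=1$ and $a(0)b(0)=1$. Applying reversal and (i) gives $c^{\,*}=a^{\,*}b^{\,*}$, where $c^{\,*}$ is again a $0$--$1$ polynomial by (ii), $a^{\,*}$ and $b^{\,*}$ have non-negative coefficients by (ii), and one of them is non-integer by (ii). If $a(0)=1$ then by (iii) both $a^{\,*}$ and $b^{\,*}$ are monic, so $c^{\,*}=a^{\,*}b^{\,*}$ exhibits $a^{\,*}$ as a factor of an unfair polynomial; if $a(0)\in(0,1)$, instead write $c^{\,*}=\bigl(a(0)^{-1}a^{\,*}\bigr)\bigl(a(0)\,b^{\,*}\bigr)$, whose factors are monic with non-negative coefficients and where $a(0)^{-1}a^{\,*}$ is non-integer (its leading coefficient is $a(0)^{-1}\notin\mathbf{Z}$, as $a$ is monic). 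Either way $a^{\,*}$ is a factor of an unfair polynomial, and the converse follows by running the same argument on $a^{\,*}$ using $(a^{\,*})^{\,*}=a$.

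I expect the algebra to be immediate; the only place needing genuine care is the bookkeeping in the last two paragraphs, namely reconciling ``$0$--$1$ and monic'' with the behaviour of constant terms under reversal. Under the paper's running conventions (in particular if one normalizes $c(0)=1$, or restricts attention to $a$ with $a(0)\in\{0,1\}$) the $a(0)\in(0,1)$ subcase does not arise and the proof collapses to the single line: $c=ab\iff c^{\,*}=a^{\,*}b^{\,*}$, with reversal preserving ``monic'', ``$0$--$1$'', ``non-negative coefficients'', and ``non-integer coefficient'' throughout.
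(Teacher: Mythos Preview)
The paper does not actually supply a proof of this theorem; it is stated as ``a surprisingly useful, although simple result'' and left to the reader. Your argument is the natural one and is correct: the reversal map $p\mapsto p^*$ is multiplicative and merely permutes the coefficient sequence, so it preserves each of the properties ``monic'', ``$0$--$1$'', ``non-negative coefficients'', and ``has a non-integer coefficient'', provided constant terms are $1$. Since the paper works throughout under the standing convention $a_0=a_k=1$ (and hence, after stripping powers of $x$, $b_0=c_0=1$), your final paragraph is already the whole proof: $c=ab\iff c^{*}=a^{*}b^{*}$.

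Your attempt to also cover the case $a(0)\in(0,1)$ goes beyond what the statement requires and contains a small slip: the coefficient of $a(0)^{-1}a^{*}$ that equals $a(0)^{-1}$ is the \emph{constant} term, not the leading one, and it need not be non-integer (take $a(0)=1/2$, giving $a(0)^{-1}=2$). In the paper's framework $a^{*}$ is itself meant to be a monic candidate factor, which already forces $a_0=1$, so this subcase does not arise and the difficulty is moot.
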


In \cite{Ghidelli} the polynomial $1+ t x^2 + x^5$ was considered, where $0 < t < 1$.
Some of the techniques utilized in \cite{Ghidelli} are also used in this paper, albeit in a more 
    automated manner, (see Section \ref{sec:zos}).

For polynomials up to degree 5, all polynomials can be quickly shown to not be 
    factors of an unfair polynomial, with the exception of 
    $1+t x^2 + x^5$ and it's reciprocal.
Here, $0 < t < 1$.
Using techniques similar to those in Section \ref{sec:zos} it was shown in \cite{Ghidelli}
    that $b_n = 1 - t b_{n-3} - b_{n-5}$ for $5 \leq n \leq 10000$.
Further, the values $b_0, b_1, \dots, b_5$ are explicitly given in terms of 
    $t$.
For $0.005 \leq t < 1$, it was shown, based upon this linear recurrence, that 
    there exists an $n \leq 10000$ such that $b_n < 0$.
Hence, if $0.005 \leq t < 1$ then $a(x)$ is not a factor of an unfair polynomial.
If instead $0 < t < 0.005$, then an analysis on the location 
    of the roots of $a(x)$ was used to show that $b(x)$ must eventually have a 
    negative coefficient.

For the first step, we use a 
    simply trinary logic to show that most polynomials of degree at most 15 are not
    factors of an unfair polynomial.
This step was also used in \cite{Ghidelli}, albeit in a less automated manner.  This is discussed in Section \ref{sec:zos}.

In some cases the trinary logic is insufficient, but a recursive case 
    analysis can be utilized in addition to the trinary logic to derive a contradiction.
This is done in Section \ref{sec:recursion}.
    
A more sophisticated and computationally expense technique is given in Section \ref{sec:GB},
    utilizing Groebner basis and Quantifier Elimination.
It is interesting to note that we only needed to go up to $n=100$ to show that 
    $1 + t x^3 + x^5$ is not a factor of an unfair polynomial.
This is in contrast to $n = 10000$ which was used for $1 + t x^2 + x^5$ in \cite{Ghidelli}.

Section \ref{sec:comp} discusses the numerical results for all possible factors of degree less than or equal to 15.
In Section \ref{sec:alpha} we consider a relaxation of the definition of unfair polynomials for which solutions
    do exist.
Lastly, in Section \ref{sec:conc} makes some final remarks.

\section{Trinary logic}
\label{sec:zos}

In this section we will show how one can derive a contradiction using trinary logic.  A coefficient of $a(x)$ is one of three things.
It is either $0$, or $1$ or something strictly between $0$ and $1$.
If it is strictly between $0$ and $1$ we will denote it by $*$.
This logic was used in \cite{Ghidelli} to show most cases up to degree $5$ were not factors of unfair 0--1 polynomials, and to give the necessary structure for the last remaining degree $5$ cases.

A coefficient of $b(x)$ is one of four things. 
It may be $0$, $1$ or $*$ as before. 
In addition, it may simply be unknown as we do not have enough information to solve for it.

It is often possible to determine unknown coefficients of $b(x)$ to be one of 
    $0$, $1$ or $*$.
It is also often possible to determine a contradiction based upon known information.

Consider the product 
\begin{align*}
    &(a_0 + a_1 x + \dots a_k x^k) (b_0 + b_1 x + \dots b_n x^n) \\
    &\hspace{2cm}
= a_0 b_0 + (a_0 b_1 + b_0 a_1) x + \dots + \left(\sum_i a_i b_{j-i}\right) x^j + \dots .
\end{align*}
For convenience, we will denote $c_{i,j} = a_i b_j$.
This allows us to rewrite this product as
\begin{align}
    &  (a_0 + a_1 x + \dots a_k x^k) (b_0 + b_1 x + \dots + b_n x^n)  \nonumber \\ 
    & \hspace{2cm} =c_{0,0} + (c_{0,1} + c_{1,0}) x + \dots + \sum_i c_{i,j-i} x^j + \dots. \label{eq:c} 
\end{align}

By construction we have that $\sum_i c_{i, j-i}$ is either $0$ or $1$.
We also have that $a_0 = b_0 = 1$.
Below we give the trinary multiplication table:
\[
\begin{array}{l|lll}
\times  & 0 & 1 & * \\ \hline
 0 & 0 & 0 & 0 \\ 
 1 & 0 & 1 & * \\
 * & 0 & * & * 
 \end{array}
 \]
 If $a_i$ and $b_j$ are known, then we may use this information to 
    determine $c_{i,j}$.
If $c_{i,j}$ and $a_i$ are known, it is sometimes possible to use this 
    information to determine $b_j$.

From this, we can construct a table
\[
\begin{array}{l|llll}
 & b_0 & b_1 & b_2 & \dots \\ \hline
a_0 & c_{0,0} & c_{0,1} & c_{0,2} & \dots  \\
a_{1} & c_{1,0} & c_{1,1} & c_{1,2} & \dots \\
\vdots & \vdots & \vdots & \vdots &  \\
a_{k-1} & c_{k-1,0} & c_{k-1, 1} & c_{k-1, 2} & \dots \\
a_k & c_{k, 0} & c_{k, 1} & c_{k, 2} & \dots 
\end{array}
\]
By Equation \eqref{eq:c}, we observe that we can derive 
    the coefficients for $c(x)$ from this table by summing along the diagonal.
We note that this sum must always equal $0$ or $1$.

Initially, many of the values $b_j$ and $c_{i,j}$ are unknown.  
It is often possible to determine what these
    values must be, based on known information.

As $\sum_i c_{i,j-i}$ is either $0$ or $1$ we see that if $c_{i,j-i} = 1$ for some $i$ then 
    $c_{k, j-k} = 0$ for all $k \neq i$.
Additionally, if we know that $c_{i,j-i} = *$, and $c_{k, j-k}$ is unknown and lastly that $c_{\ell, j-\ell} = 0$ for all $\ell \neq i,k$ then we known that 
    $c_{k, j-k} = *$.
     
\begin{example}
Consider $a(x) = 1 + * x + x^3$.
We known that $b_0 = 1$.
This gives us the table
\[ \begin{array}{l|llll}
  & 1 & & & \\ \hline
1 & 1 \\ 
* & * \\
0 & 0 \\
1 & 1 \\
\end{array}
\]
By observing that $c_{1,0} + c_{0,1}  = * + c_{0,1}$ we get that $c_{0,1}  = *$.
By observing that $c_{3,0} + c_{2,1} + c_{1,2} + c_{0,3}  = 1 + c_{2,1} + c_{1,2} + c_{0,3}$ we
    get that $c_{2,1} = c_{1,2} = c_{0,3}  = 0$.

We can now update the table to get
\[
\begin{array}{l|llll}
  & 1 &   &   & \\ \hline
1 & 1 & * &   & 0\\ 
* & * &   & 0\\
0 & 0 & 0\\
1 & 1 \\
\end{array}
\]
By observing that $c_{0,1} = *$ and $a_0 = 1$ we get that $b_1 = *$.
By observing that $c_{1,2} = 0$ and $a_1 = *$ we get that $b_2 = 0$.
By observing that $c_{0,3} = 0$ and $a_0 = 1$ we get that $b_3 = 0$.

We can now update the table to get
\[
\begin{array}{l|llll}
  & 1 & * &  0 & 0\\ \hline
1 & 1 & * &   & 0\\ 
* & * &   & 0\\
0 & 0 & 0\\
1 & 1 \\
\end{array}
\]
Updating the table to fill in known multiplications gives
\[
\begin{array}{l|llll}
  & 1 & * & 0 & 0 \\ \hline
1 & 1 & * & 0 & 0\\ 
* & * & * & 0 & 0\\
0 & 0 & 0 & 0 & 0\\
1 & 1 & * & 0 & 0\\
\end{array}
\]
At this point we have a contradiction.
We have that $c_{2,0} + c_{1,1} + c_{0,2}  = 0 + * + 0 = *$ which is a number strictly between $0$ and $1$.
\end{example}

Such logic, when automated, and combined with Theorem \ref{thm:reverse}, can show a large number of $a(x)$ cannot be factors of an unfair polynomial.
See Section \ref{sec:comp}.

Using these techniques (combined with Theorem \ref{thm:reverse}), we can eliminate all cases for 
    $k = 2, 3, 4, 5$ and $6$ with the exception of
    \[ 
    1 + * x^2 + x^5,\hspace{1cm} 1 + * x + * x^2 + x^5, \hspace{1cm} 1 + * x + * x^2 + x^6
    \] 
and their reciprocals.

\section{Recursion}
\label{sec:recursion}

It occasionally happens that when we fill in the table using the techniques from Section \ref{sec:zos} that we do not arrive at a contradiction, and at the same time, we cannot derive any further information for the coefficients of $b(x)$.
As a coefficient of $b(x)$ is either $0$, $1$ or $*$, we can recursively check the three cases separately.

\begin{example}
Consider $a(x) = 1 + * x + * x^2 + x^5$.

Using the techniques from Section \ref{sec:zos}, we can construct the table (up to degree 6) to get
\[
\begin{array}{l|lllllll}
  & 1 & * &   & 0 & 0 & 0 & * \\ \hline 
1 & 1 & * &   & 0 & 0 & 0 & * \\
* & * & * & * & 0 & 0 & 0 & * \\
* & * & * &   & 0 & 0 & 0 & * \\
0 & 0 & 0 & 0 & 0 & 0 & 0 & 0 \\
0 & 0 & 0 & 0 & 0 & 0 & 0 & 0 \\
1 & 1 & * &   & 0 & 0 & 0 & * 
\end{array}
\]
At this point, no further information can be determined, and we have not reached a contradiction. We now recursively check $b_2$ as being either $0$, $1$ or $*$.

When $b_2 = 1$ we get the table
\[
\begin{array}{l|llllllll}
  & 1 & * & 1 & 0 & 0 & 0 & * \\ \hline 
1 & 1 & * & 1 & 0 & 0 & 0 & * \\
* & * & * & * & 0 & 0 & 0 & * \\
* & * & * & * & 0 & 0 & 0 & * \\
0 & 0 & 0 & 0 & 0 & 0 & 0 & 0 \\
0 & 0 & 0 & 0 & 0 & 0 & 0 & 0 \\
1 & 1 & * & * & 0 & 0 & 0 & * 
\end{array}
\]
which gives a contradiction as $c_{2,0} + c_{1,1} + c_{0,2}  = * + * + 1 > 1$.

When $b_2 = 0$ we get the table
\[
\begin{array}{l|llllllll}
  & 1 & * & 0 & 0 & 0 & 0 & * \\ \hline 
1 & 1 & * & 0 & 0 & 0 & 0 & * \\
* & * & * & * & 0 & 0 & 0 & * \\
* & * & * & 0 & 0 & 0 & 0 & * \\
0 & 0 & 0 & 0 & 0 & 0 & 0 & 0 \\
0 & 0 & 0 & 0 & 0 & 0 & 0 & 0 \\
1 & 1 & * & 0 & 0 & 0 & 0 & * 
\end{array}
\]
which gives a contradiction as $c_{1,2} = *$ and $b_2 = 0$.

Lastly, when $b_2 = *$ we get the table
\[
\begin{array}{l|llllllll}
  & 1 & * & * & 0 & 0 & 0 & * \\ \hline 
1 & 1 & * & * & 0 & 0 & 0 & * \\
* & * & * & * & 0 & 0 & 0 & * \\
* & * & * & * & 0 & 0 & 0 & * \\
0 & 0 & 0 & 0 & 0 & 0 & 0 & 0 \\
0 & 0 & 0 & 0 & 0 & 0 & 0 & 0 \\
1 & 1 & * & * & 0 & 0 & 0 & * 
\end{array}
\]
which gives a contradiction as $c_{4,0} + c_{3,1} + c_{2,2} + c_{1,3} + c_{0,4}  = *$ which is a number strictly between $0$ and $1$.
\end{example}

In some cases we have to use recursion multiple times on a candidate polynomial.

Using these recursive techniques (combined with Theorem \ref{thm:reverse}), we can eliminate all cases for 
    $k = 2, 3, 4, 5$ and $6$ with the exception of 
    $1 + * x^2 + x^5$ and it's reciprocal.

\section{Symbolic techniques}
\label{sec:GB}

The logic in Section \ref{sec:zos} did not use detailed information about non-integer values.
The only information we used was if a value was $0$, $1$ or strictly between $0$ and $1$.
It is possible to use more precise information about how unknown values interact.

In the previous section, we used a simplified form of multiplication for the values $0$, $1$ and $*$.  
Instead here, we use more precise information about the multiplication.  We note that $a_i b_j = c_{i,j}$.  
Hence, if information is known about these terms, then we have the identity $c_{i,j} - a_i b_j = 0$.  
In the previous section, we used the fact that $\sum_i c_{i, j-i}$ is either $0$ or $1$.  
As before, if we know that one of these terms is identically $1$, then all other terms must be zero.  
If all values of the diagonal are determined (i.e., not ``unknown''), then we have the additional polynomial identity $\left(\sum_i c_{i, j-i} \right)(1-\sum c_{i, j-i}) = 0$.  
We will call the set of all known identities at any step of the calculation the {\em basis of identities}, and we will denote this by $\mathcal{I}$.  
We can use quantifier elimination to determine if there is a solution to this basis of identities where all variables in the basis are strictly between $0$ and $1$.  
If there is not a solution, then we have derived a contradiction for this stage of the calculation. 
If instead, there is a solution, we continue to check more terms to (hopefully) derive a contradiction, or find a counter-example to 
    the unfair 0--1 polynomial conjecture.

This process uses recursion, although most branches of the recursion quickly lead to a contraction.

When adding new identities to $\mathcal{I}$, it is important to do this taking into account all known identities already in $\mathcal{I}$.  We can do this by representing $\mathcal{I}$ by its Groebner basis, and reducing all entries in the table with respect to this Groebner basis.

It is often the case that the variety represented by the basis of identities is actually the union of two or more sub-varieties.  (This is easy to check by looking at the Groebner basis).  If this is the case, we typically recurse on these sub-varieites, as this improves the performance of the calculations and the algorithm.

\begin{example}
Consider as an example the polynomial $1 + s x + t x^4 + x^7$.

Our initial table looks like
\[
\begin{array}{l|ll}
  & 1 & \\
\hline
1 & 1 & \\
s & s & \\
0 & 0 & \\
t & t & \\
0 & 0 & \\
0 & 0 & \\
0 & 0 & \\
1 & 1 &
\end{array}
\]

Filling in the easy to determine information gives
\[
\begin{array}{l|llllllll}
  & 1 & -  &-   & -  & 0 & -  & 0 & 0 \\
\hline
1 & 1 & -  & -  & -  & 0 & -  & 0 & 0  \\
s & s & -  & -  & -  & 0 & -  & 0 & 0  \\
0 & 0 & 0 & 0 & 0 & 0 & 0 & 0 & 0  \\
t & t & -  & -  & -  & 0 & -  & 0 & 0  \\
0 & 0 & 0 & 0 & 0 & 0 & 0 & 0 & 0 \\
0 & 0 & 0 & 0 & 0 & 0 & 0 & 0 & 0  \\
0 & 0 & 0 & 0 & 0 & 0 & 0 & 0 & 0 \\
1 & 1 &-   & -  & -  & 0 &  - & 0 & 0 
\end{array}
\]
At this point, we recurse.
Either $b_1 = 0$, $b_1 = 1$ or $0 < b_1 < 1$.

Both $b_1 = 0$ and $b_1 = 1$ quickly lead to a contradiction.  
So we may assume that $0 < b_1 < 1$.
This gives
\[
\begin{array}{l|lllllllll}
  & 1 & b_1 &  - & -  & 0 & -  & 0 & 0 & \\
\hline
1 & 1 & b_1 &-   &  - & 0 & -  & 0 & 0 & \\
s & s & s b_1 & -  & -  & 0 & -  & 0 & 0 & \\
0 & 0 & 0 & 0  & 0  & 0 &  0 & 0 & 0 & \\
t & t & t b_1 & -  &-   & 0 &  - & 0 & 0 & \\
0 & 0 & 0 & 0  & 0  & 0 &  0 & 0 & 0 & \\
0 & 0 & 0 & 0  & 0  & 0 &  0 & 0 & 0 & \\
0 & 0 & 0 & 0  & 0  & 0 & 0 & 0 & 0 & \\
1 & 1 & b_1 &-   &-   & 0 & -  & 0 & 0 &
\end{array}
\]
We also note that $s+ b_1 = 0$ or $s + b_1 = 1$.
As $s, b_1 > 0$ we easily see that the first cannot occur.
Hence we see that $s+b_1 = 1$.

We now update $\mathcal{I}$ to be \[ \mathcal{I} = \langle s + b_1 -1 \rangle.\]
We also have set of known inequalities.
\[ 0 < s,\ t,\ b_1 < 1.\]
A quick check with quantifier elimination ensures that there is a possible solution to 
\begin{align*}
    &\exists s,\ t,\ b_1:\\
    & \hspace{1cm} 0 < s < 1,\ 0 < t < 1,\ 0 < b_1 < 1, \\
    & \hspace{1cm} s+b_1 -1 = 0.
\end{align*}

We next recurse on $b_2$.  This may be $0$, $1$ or strictly between $0$ and $1$.
We quickly derive contradictions if $b_2 = 0$ or $b_2 = 1$.  
Hence we may assume that $0 < b_2 < 1$.
We perform all calculations modulo the basis of identities $\mathcal{I} = \langle s + b_1 - 1\rangle$
    (which is easy to do via Groebner basis).
This gives us the table
\[ 
\begin{array}{l|lllllllll}
  & 1 & -s+1 & b_2 & -  & 0 &  - & 0 & 0 & \\ 
\hline 
1 & 1 & -s+1 & b_2 & -  & 0 & -  & 0 & 0 & \\ 
s & s & -s^2+s & s b_2 & -  & 0 & -  & 0 & 0 & \\ 
0 & 0 & 0 & 0 & 0  & 0 & 0  & 0 & 0 & \\ 
t & t & -s t+t & t b_2 & -  & 0 & -  & 0 & 0 & \\ 
0 & 0 & 0 & 0 & 0  & 0 &0   & 0 & 0 & \\ 
0 & 0 & 0 & 0 & 0  & 0 &0   & 0 & 0 & \\ 
0 & 0 & 0 & 0 & 0  & 0 &0   & 0 & 0 & \\ 
1 & 1 & -s+1 & b_2 & -  & 0 &  - & 0 & 0 & 
\end{array}
\]

By considering the sum of the diagonal $c_{2,0} + c_{1,1} + c_{0,2}$, we see that $b_2 - s^2 + s$ is either $0$ or $1$.
As $b_2$ is strictly positive and $-s^2 + s$ is between $0$ and $1$, we see that this
    sum must equal $1$.
Hence we may add $b_2 - s^2 +s -1$ to our basis of identities.
This gives us 
\[ \mathcal{I} = \langle b_1 + s -1,\ b_2 -s^2 + s - 1 \rangle. \]
We then check via quantifier elimination to determine that there exists a solution to
\begin{align*}
    &\exists s,\  t,\  b_1,\  b_2: \\
    & \hspace{1cm} 0 < s < 1,\ 0 < t < 1,\ 0 < b_1 < 1,\ 0 < b_2 < 1,\\
    & \hspace{1cm} s+b_1 -1 = 0,\ b_2 - s^2 + s -1 = 0.
\end{align*}

We next recurse on $b_3$.  
As before we quickly derive a contradiction if $b_3 = 0$ or $b_3 = 1$.
When we assume $0 < b_3 < 1$, we can conclude that $s^3 - s^2 + s + t + b_3 -1 = 0$.
Expanding this table out, (reducing modulo $\mathcal{I}$), we get
\[ 
\begin{array}{l|lllllllll}
  & 1 & -s+1 & s^2-s+1 & -s^3+s^2-s-t+1 & 0 & -  & 0 & 0 & \\ 
\hline 
1 & 1 & -s+1 & s^2-s+1 & -s^3+s^2-s-t+1 & 0 & -  & 0 & 0 & \\ 
s & s & -s^2+s & s^3-s^2+s & -s^4+s^3-s^2-s t+s & 0 & -  & 0 & 0 & \\ 
0 & 0 & 0 & 0 & 0 & 0 & 0 & 0 & 0 & \\ 
t & t & -s t+t & s^2 t-s*t+t & -s^3 t+s^2 t-s t-t^2+t & 0 & -  & 0 & 0 & \\ 
0 & 0 & 0 & 0 & 0 & 0 & 0  & 0 & 0 & \\ 
0 & 0 & 0 & 0 & 0 & 0 &0  & 0 & 0 & \\ 
0 & 0 & 0 & 0 & 0 & 0 & 0  & 0 & 0 & \\ 
1 & 1 & -s+1 & s^2-s+1 & -s^3+s^2-s-t+1 & 0 &-   & 0 & 0 & 
\end{array}
\]
When summing along the diagonal $\sum_{i} c_{i, 4-i}$ we get $-s^4+s^3-s^2-2 s t+s+t$ we must be either $0$ or $1$.
When this is added to the basis of identities, quantifier elimination shows that there
    are no solutions to either
\begin{align*}
    & \exists s,\ t,\ b_1,\ b_2,\ b_3: \\
    & \hspace{1cm} 0 < s < 1,\ 0 < t < 1,\ 0 < b_1 < 1,\ 0 < b_2 < 1,\\
    & \hspace{1cm} s+b_1 -1 = 0,\ b_2 - s^2 + s -1 = 0,\ s^3 - s^2 + s + t + b_3 -1 = 0,\\
    & \hspace{1cm} -s^4+s^3-s^2-2 s t+s+t = 0
\end{align*}
or 
\begin{align*}
    & \exists s,\ t,\ b_1,\ b_2,\ b_3: \\
    & \hspace{1cm} 0 < s < 1,\ 0 < t < 1,\ 0 < b_1 < 1,\ 0 < b_2< 1,\\
    & \hspace{1cm} s+b_1 -1 = 0,\ b_2 - s^2 + s -1 = 0,\ s^3 - s^2 + s + t + b_3 -1 = 0,\\
    & \hspace{1cm} -s^4+s^3-s^2-2 s t+s+t-1=0.
\end{align*}
As such, we conclude that $1 + s x + t x^3 + x^7$ is not a potential factor of a unfair polynomial.
\end{example}

If upon some branch of this calculation we have a sequence of $k+1$ coefficients of $b(x)$ that are identically zero, and there exists a solution to $\mathcal{I}$ with all variables in $(0,1)$, then we have found a counter-example to the unfair 0--1 polynomial conjecture.  These tests were performed with the assumption that the degree of $b(x)$ was bounded by $200$.  Further, we used a four hour cap on the cpu time per test.  Under these restrictions, no test returned a counter-example to the unfair 0--1 polynomial conjecture.  Examples were found for a variation of the problem, as we will discuss in Section \ref{sec:alpha}.

\section{Numerical results}
\label{sec:comp}

In this section we present some of the results of our computational experiments. In Table \ref{tab:result} we indicate for each $k$, how many polynomials of degree $k$ there are
    with $a_0 = a_k = 1$ and at least one non-integer coefficient. 
    
The first two columns gives the number of polynomials that can be shown not to be factors of an
    unfair polynomial using the simple non-recursive trinary logic of Section \ref{sec:zos}, along
    with the time needed to perform these tests.
These techniques were very successful, showing a vast majority of the polynomials are not
    factors of unfair polynomials.
On average, these took 0.012 seconds per test.

The next two columns gives the number of polynomials than can be shown not to be factors of an
    unfair polynomial using simple recursive trinary logic of Section \ref{sec:recursion}, along
    with the time needed to perform these tests.
This was successful for 65\% of the remaining tests that were not resolved by non-recursive logic.
These were computationally more expensive, taking on average 1.4 seconds per tst.

The last two columns gives the number of polynomials that can be shown not to be factors of an
    unfair polynomial by using quantifier elimination as described in  Section \ref{sec:GB}.
This was successful for 75\% of the remaining tests.
This was by far the most computationally expensive test, taking on average 29 minutes per test.

\begin{landscape}
\begin{table}
\begin{tabular}{l|llllllll}
$k$ & Number   & Solved by & Time    & Solved         & Time  & Solved by & Time & Remaining\\
   &  of cases & Trinary   &  Req.   & by Recursive   & Req.  & Symbolic  & Req. & cases            \\
   &           & logic     &         & Trinary logic  &       & logic     &      &      \\
\hline
2 & 1 & 1 & .007 seconds & 0 & - seconds & 0 & - seconds & 0 \\ 
3 & 5 & 5 & .057 seconds & 0 & - seconds & 0 & - seconds & 0 \\ 
4 & 19 & 19 & .121 seconds & 0 & - seconds & 0 & - seconds & 0 \\ 
5 & 65 & 61 & .592 seconds & 2 & 3.0 seconds & 2 & 1.4 hours & 0 \\ 
6 & 211 & 209 & 1.6 seconds & 2 & .728 seconds & 0 & - seconds & 0 \\ 
7 & 665 & 643 & 6.5 seconds & 10 & 22.9 seconds & 6 & 8.7 hours & 6 \\ 
8 & 2059 & 2029 & 19.4 seconds & 20 & 24.9 seconds & 10 & 59.3 minutes & 0 \\ 
9 & 6305 & 6221 & 1.1 minutes & 48 & 1.6 minutes & 28 & 14.8 hours & 8 \\ 
10 & 19171 & 19025 & 3.5 minutes & 88 & 2.9 minutes & 48 & 23.5 hours & 10 \\ 
11 & 58025 & 57613 & 11.1 minutes & 206 & 10.2 minutes & 146 & 4.4 days & 60 \\ 
12 & 175099 & 174539 & 32.5 minutes & 390 & 10.9 minutes & 142 & 2.3 days & 28 \\ 
13 & 527345 & 525733 & 1.7 hours & 912 & 40.6 minutes & 522 & 14.3 days & 179 \\ 
14 & 1586131 & 1583587 & 5.4 hours & 1660 & 1.0 hours & 690 & 15.5 days & 194 \\ 
15 & 4766585 & 4760689 & 17.6 hours & 4100 & 2.3 hours & 1306 & 39.5 days & 490 \\ \hline 
Total  & 7141686 & 7130374 & 1.1 days & 7438 & 4.4 hours & 2900 & 78.0 days & 975 
\end{tabular}
\caption{Cases up to degree 15}
\label{tab:result}
\end{table}
\end{landscape}

All calculations were done using Maple 2023 \cite{Maple2023}.  
There were done on 4 machines, each with four Intel Xeon Gold 6230 20-core 2.1 GHz (Cascade Lake)
(768GB Memory).
Calculations were allowed to run for a maximum of 4 hours.

\section{$\alpha$-unfair polynomials}
\label{sec:alpha}

In the previous sections, we gave an algorithm to test if a potential factor was actually a factor of an unfair polynomial.
In this section, we will examine how much we would need to relax the definition of an unfair polynomial so that a factor would exist.
To this end, we give the 
\begin{definition}
    Let $c(x) = \sum c_i x^i$ with $c_i \in \{0,1\}$.
    We say that $c(x)$ is $\alpha$-unfair if there exists a factorization $c(x) = a(x) b(x)$ with $a(x) = \sum a_i x^i$, $b(x) = \sum b_i x^i$ and 
    \begin{itemize}
        \item $b_0 = a_0 = 1$,
        \item There exists an $i$ such that $b_i \not\in \{0,1\}$,
        \item $-\alpha \leq b_i \leq 1+\alpha$,
        \item $-\alpha \leq a_i \leq 1+\alpha$.
    \end{itemize}
\end{definition}

We easily see that a $0$-unfair polynomial is an unfair polynomial from Section \ref{sec:intro}.
By Table \ref{tab:result}, we see that there do not exist $0$-unfair polynomials with a factor of degree less than or equal to $6$.

The methods of Section \ref{sec:GB} can be modified to search for for examples of $\alpha$-unfair polynomials.  There are two key differences.  The first is, when testing if there is a solution via quantifier elimination, we test in the variables are in the range $[-\alpha, 1+\alpha]$ instead of $(0,1)$.  The second difference is that we can no longer assume that if $c_{i,j} =1$ then all other terms in the diagonal must be identically $0$.

\begin{example}
    Let $\alpha = 1$ and $a(x) = 1 + s x + x^2$.
We quickly get the  table
\[ 
\begin{array}{l|ll}
  & 1 & \\ 
\hline 
1 & 1 & \\ 
s & s & \\ 
1 & 1 & 
\end{array}
\]
We then recurse on $b_1$, testing if it is $0$, $1$ or a value in $[-\alpha,1+\alpha] \setminus {0,1} = [-1, 0) \cup (0,1) \cup (1, 2]$.
We derive contradictions if $b_1$ is $0$ or $1$, giving us
\[ 
\begin{array}{l|lll}
  & 1 & b_1 & \\ 
\hline 
1 & 1 & b_1 & \\ 
s & s & s b_1 & \\ 
1 & 1 & b_1 & 
\end{array}
\]
Here the diagonal $s + b_1$ must be either $0$ or $1$.  
In this case, we do not derive a contradiction to this sum is $0$.
Hence, we first test the case where $\mathcal{I} = \langle s + b_1 \rangle$ and $b_1 = -s$.
This gives
\[ 
\begin{array}{l|lll}
  & 1 & -s & \\ 
\hline 
1 & 1 & -s & \\ 
s & s & -s^2 & \\ 
1 & 1 & -s & 
\end{array}
\]
Continuing in this fashion, one of the branches gets to
\[ 
\begin{array}{l|llllllllllll}
  & 1 & -s & 1+s & -s & 1 & 0 & 0 & 0 & 0 & 0 &\\ 
\hline 
1 & 1 & -s & 1+s & -s & 1 & 0 & 0 & 0 & 0 & 0 &  \\ 
s & s & -1-s & 2*s+1 & -1-s & s & 0 & 0 & 0 & 0 & 0 &  \\ 
1 & 1 & -s & 1+s & -s & 1 & 0 & 0 & 0 & 0 & 0 &  & 
\end{array}
\]
with $\mathcal{I} = \langle s^2-s-1, s+b_1, -1-s+b_2, b_3+s \rangle$.
This does have a solution with all variables in the range $(-1, 2)$.
This gives us an example of a $1$-unfair polynomial.  
In particular, we have that
\begin{align*}
x^6 + x^4+x^3+x^2+1&  = (1+s x + x^2) (1 - s x + (1+x) x^2 - s x^3 + x^4)  \\
& \approx
    (1-0.618 x + x^2) (1 + 0.618 x + 0.381 x^2 + 0.618 x^3 + x^4) .
\end{align*}
where $s \approx -0.618$ is the root of $s^2-s-1$.

This in fact is an example of a $\frac{\sqrt{5}-1}{2}$-unfair polynomial.
\end{example}

We observe
\begin{lemma}
    If there exists an $\alpha$-unfair polynomial with a factor of degree $k$, then for all $1 \leq i$ and $0 \leq j \leq i-1$ there exists an $\alpha$-unfair polynomial with a factor of degree $ik+j$.
\end{lemma}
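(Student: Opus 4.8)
The plan is to build, from a given $\alpha$-unfair polynomial $c(x) = a(x)b(x)$ with $\deg a = k$, a whole family of $\alpha$-unfair polynomials by two elementary substitutions. First I would show that if $c(x) = a(x)b(x)$ witnesses $\alpha$-unfairness, then so does $c(x^i) = a(x^i)b(x^i)$ for every $i \geq 1$: the coefficients of $a(x^i)$ and $b(x^i)$ are exactly the coefficients of $a$ and $b$ (just spread out, with zeros inserted), so all four bullet conditions of the definition are inherited verbatim — $a(0) = b(0) = 1$ still holds, some coefficient of $b(x^i)$ is still a non-$\{0,1\}$ value, and the coefficient bounds $[-\alpha, 1+\alpha]$ are unchanged. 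Moreover $c(x^i)$ still has all coefficients in $\{0,1\}$, and $\deg(a(x^i)) = ik$. This handles the case $j = 0$.

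For $0 \leq j \leq i - 1$ I would multiply through by $x^j$ on the $b$-side: set $\tilde{b}(x) = x^j b(x^i)$ and keep $\tilde{a}(x) = a(x^i)$, so that $\tilde{a}(x)\tilde{b}(x) = x^j c(x^i)$. Here one must be a little careful, because $\tilde{b}$ is no longer monic with constant term $1$ — it has constant term $0$. The fix is to reshuffle which factor carries the shift: since $c(x^i)$ is a $0$--$1$ polynomial of degree $i\deg c$ and $j \leq i-1 < i$, the polynomial $x^j c(x^i)$ is again a $0$--$1$ polynomial, and I want to exhibit it as a product of two factors each monic with constant term $1$. The natural move is instead to observe that $a(x^i)$ has constant term $1$ and that $x^j c(x^i) = a(x^i) \cdot \bigl(x^j b(x^i)\bigr)$ only after we renormalize: if $a$ has degree $k$ then consider $a^*(x^i) x^{\,?}$ — more cleanly, apply the shift to the reciprocal side. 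I would use Theorem~\ref{thm:reverse} here: $a$ is a factor of an (ordinary, and with the same argument, $\alpha$-) unfair polynomial iff $a^*$ is, and the degree of $a^*$ equals the degree of $a$, so no degree is lost; combining the reversal with the $x \mapsto x^i$ substitution and a degree-$j$ shift absorbed into the high-degree end gives a $0$--$1$ product $\tilde c(x) = \tilde a(x)\tilde b(x)$ with $\tilde a$ monic of degree $ik + j$, constant term $1$, and $\tilde b$ inheriting the non-integer coefficient and the $[-\alpha,1+\alpha]$ bounds. Thus $\tilde c$ is $\alpha$-unfair with a factor of degree $ik+j$.

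The step I expect to be the main obstacle — really the only subtle point — is making the degree-$j$ shift compatible with the normalization $a_0 = b_0 = 1$ and with $\tilde c$ remaining a genuine $0$--$1$ polynomial. Spreading out via $x \mapsto x^i$ is free, but a raw multiplication by $x^j$ kills the constant term of whichever factor it hits; one has to route the $x^j$ through the reciprocal polynomial (Theorem~\ref{thm:reverse}) so that it lands on the leading coefficient side instead, where monicity rather than constant-term-$1$ is the relevant normalization. Once that bookkeeping is set up correctly, verifying the four bullets of the $\alpha$-unfair definition is immediate, since none of the nonzero coefficients of $a$ or $b$ are altered — they are merely relocated — and the $0$--$1$ structure of $c$ is preserved because $x^j c(x^i)$ has, in each degree, at most one of the original coefficients of $c$ contributing (as $j < i$). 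I would close by noting that $i = 1$, $j = 0$ recovers the trivial case and that the construction is uniform in $\alpha$, so in particular it applies to the $\tfrac{\sqrt 5 - 1}{2}$-unfair example exhibited above.
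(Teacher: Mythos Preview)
Your treatment of the case $j=0$ is correct and matches the paper. The gap is in the case $j>0$. You correctly identify that multiplying a factor by $x^j$ destroys the normalization $a_0=b_0=1$, but the proposed repair via reciprocals does not actually work. Reciprocation sends $x^j p(x)$ (with $p(0)\neq 0$, $\deg p = m$) to $x^{m+j}(1/x)^j p(1/x) = p^*(x)$, a polynomial of degree $m$ again: the $x^j$ factor is simply stripped off, and no degree is gained. No combination of the operations you list --- substitute $x\mapsto x^i$, multiply by $x^j$, pass to the reciprocal --- will produce a factor of degree $ik+j$ while keeping both factors with constant term $1$; the $j$ extra degrees have nowhere to go. Your sentence ``combining the reversal with the $x\mapsto x^i$ substitution and a degree-$j$ shift absorbed into the high-degree end gives \ldots'' is an assertion, not a construction, and I do not see any concrete $\tilde a,\tilde b$ behind it.

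The paper's device is different and is the missing idea: instead of multiplying by $x^j$, multiply the $a$-side by $x^j+1$. Set $A(x)=a(x^i)(x^j+1)$ and $B(x)=b(x^i)$. Then $A(x)B(x)=(x^j+1)\,c(x^i)$. Because $0<j<i$, the nonzero coefficients of $a(x^i)$ (sitting at positions $\equiv 0 \pmod i$) do not collide with their shifts by $j$, so the coefficients of $A$ are just the coefficients of $a$ repeated, together with zeros; likewise $(x^j+1)c(x^i)$ remains a $0$--$1$ polynomial. Now $A$ has constant term $1$ and degree $ik+j$, $B$ has constant term $1$ and inherits a non-$\{0,1\}$ coefficient from $b$, and every coefficient bound in $[-\alpha,1+\alpha]$ is preserved. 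This is the step to replace your reciprocal argument with.
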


\begin{proof}
Let $a(x)$ and $ b(x)$ be factors of an $\alpha$-unfair polynomial, with
    $a(x)$ of degree $k$.
We see that $a(x) b(x)$ is a 0--1 polynoimal.
Define $A(x) = a(x^i)$ and $B(x) = b(x^i)$.
We see that $A(x) B(x)$ is a 0--1 polynomial, and hence there exists a
    factor of an $\alpha$-unfair polynomial of degree $ki$.
Similarly if $j \neq 0$ we can take $A(x) = a(x^i) (x^j+1)$ and $B(x) = b(x^i)$ to get a factor of an $\alpha$-unfair polynomial of degree $ki+j$.
\end{proof}

We used this algorithm to computationally explore how small we could
    have $\alpha$ such that there exists an $\alpha$-unfair polynomial.
To do this, we started with a reasonably large $\alpha$, say $\alpha =1$, and ran the algorithm.
If we found a solution, we would then update $\alpha$ to be slightly
     smaller than the $\alpha$ generated by this solution and repeat.
Based upon these experiments, and verification of an observed pattern, we make the 
\begin{conj}
\label{conj:Crazy3}
For all $\epsilon > 0$ there exists an $\epsilon$-unfair polynomial.
\end{conj}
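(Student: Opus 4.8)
```latex
\textbf{Proof proposal for Conjecture \ref{conj:Crazy3}.}

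The plan is to exhibit, for each $\epsilon > 0$, an explicit $\epsilon$-unfair polynomial, building on the $\frac{\sqrt5-1}{2}$-unfair example above and the observed pattern from the computational search. The natural strategy is to produce an infinite family of factors $a_m(x) = 1 + s_m x + x^2$ (or a closely related family) together with companion factors $b_m(x)$ such that $a_m(x) b_m(x)$ is a $0$--$1$ polynomial, where the largest deviation of the coefficients of $b_m$ from the interval $[0,1]$ tends to $0$ as $m \to \infty$. First I would revisit the branch of the symbolic calculation that produced the golden-ratio example: there $b(x)$ satisfied a linear recurrence $b_n = 1 - s\, b_{n-1} - b_{n-2}$ driven by the reciprocal of $a(x) = 1 + sx + x^2$, and the value of $s$ was forced by requiring $b(x)$ to terminate (i.e. all sufficiently high coefficients vanish). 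I would look for the general phenomenon: for a parameter $s$ in a suitable range, the tail of $1/(1+sx+x^2)$ decays and, by truncating at a well-chosen degree and absorbing the error into a single coefficient, one gets $b(x)$ whose coefficients lie in $[-\delta(s), 1+\delta(s)]$ with $\delta(s)\to 0$ as $s$ approaches the boundary of the range (likely $s\to 0$, where $1+sx+x^2$ degenerates toward $1+x^2$, which is a genuine $0$--$1$ factor).

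The key steps, in order, would be: (1) Fix the candidate family, say $a_s(x) = 1 + s x + x^2$ with $s$ small and positive (or its reciprocal), and analyze the power series $1/a_s(x) = \sum_{n\ge 0} u_n(s) x^n$; the $u_n$ satisfy $u_n = -s u_{n-1} - u_{n-2}$, so $u_n$ is essentially $\cos(n\theta)/\text{(something)}$ where $2\cos\theta = -s$, and the $u_n$ are bounded uniformly in $n$ with amplitude $O(1)$ but oscillating. (2) Show that a bounded-degree polynomial multiple $b_s(x)$ of modest length produces a $0$--$1$ product: the idea is that multiplying $a_s$ by a carefully chosen $b_s$ (perhaps $b_s(x) = (1 - x^{N})/a_s(x) + \text{correction}$, suitably rounded) yields coefficients that are exactly $0$ or $1$ except at $O(1)$ positions where they are forced to be non-integers of size $O(s)$ or $O(1)$, and all coefficients of $b_s$ stay within $[-Cs, 1+Cs]$ for an absolute constant $C$. (3) Let $s = s(\epsilon) \to 0$ so that $Cs < \epsilon$; verify that for small enough $s$ the polynomial $b_s$ still has at least one non-integer coefficient (this is automatic since $s$ itself is a coefficient of $a_s$, so already $a_s$ is a non-integer factor), giving an $\epsilon$-unfair polynomial. (4) Invoke the pattern observed and partially verified computationally to confirm the explicit shape of $b_s(x)$ and to pin down the constant $C$.

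The main obstacle is step (2): controlling the coefficients of $b_s(x)$ so that all but a bounded number are exactly integers while the non-integer ones remain within $[-\epsilon, 1+\epsilon]$. Requiring $a_s(x) b_s(x)$ to be a genuine $0$--$1$ polynomial — not merely close to one — is a rigidity condition: one cannot just truncate $1/a_s$, because the truncation error spreads a non-integer perturbation over infinitely many coefficients. The resolution, suggested by the golden example (where $b(x) = 1 - sx + (1+s)x^2 - sx^3 + x^4$ is a genuine polynomial and $s^2 - s - 1 = 0$ is the exact algebraic constraint forcing termination), is that $s$ must be an algebraic number satisfying a polynomial relation that makes a finite truncation exact. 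So the real task is to find an \emph{infinite} sequence of such algebraic $s$ values accumulating at $0$ (or at another boundary point of the admissible range), each with its own finite $b_s$, and to bound the coefficient excursions of $b_s$ uniformly in the sequence. I expect these $s$ to be roots of a natural family of polynomials — plausibly related to Chebyshev-like or continuant polynomials arising from the recurrence $u_n = -su_{n-1} - u_{n-2}$ with a periodicity or near-periodicity condition — and the coefficient bound to follow from the uniform boundedness of the $u_n$ together with a counting argument on how the ``$0$ or $1$'' diagonal constraints propagate.
```
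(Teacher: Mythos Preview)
Neither your proposal nor the paper actually proves the statement: it is left as a conjecture, and the paper offers only \emph{Observations} supported by computation. So the comparison is between two heuristic strategies, not two proofs.

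The paper's route is quite different from yours. Rather than the quadratic $1+sx+x^2$ with $s\to 0$, the paper fixes the \emph{cubic} factor $A(x,t)=1+tx+tx^2+x^3$ and, for each $n\ge 1$, writes down an explicit target $0$--$1$ polynomial
\[
C(x)=1+x^3+x^{6n+2}+x^{6n+3}+x^{6n+4}+x^{6n+5}+x^{6n+6}+x^{12n+6}+x^{12n+9}.
\]
Dividing $C$ by $A(\cdot,t)$ leaves a remainder $R(x,t)=r_2(t)x^2+r_1(t)x+r_0(t)$; the allowed parameter values are the roots of $g(t)=\gcd(r_0,r_1,r_2)$, and the paper takes $t_n$ to be the smallest positive root. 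Computationally $t_n\to 0$ and the resulting $\alpha_n\to 0$ (Table~\ref{tab:alpha}), with an apparent power-law decay in the degree. The virtue of this approach is its concreteness: there is an explicit one-parameter family of candidates, and the conjecture is reduced to two purely analytic claims about that family (that $g$ has a small positive root, and that the coefficients of the quotient $B(x,t_n)$ stay in $[-\alpha_n,1+\alpha_n]$ with $\alpha_n\to 0$).

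Your strategy is more structural: analyse the recurrence $u_n=-su_{n-1}-u_{n-2}$ coming from $1/(1+sx+x^2)$, look for algebraic $s$ accumulating at $0$ that force a finite exact $b_s$, and bound the excursions of the $b_s$-coefficients by $O(s)$. This is a reasonable programme, and you correctly identify Chebyshev/continuant-type constraints as the source of the admissible $s$. But the obstacle you flag in step~(2) is exactly the crux, and nothing in your outline resolves it: because the characteristic roots of your recurrence lie on the unit circle, the $u_n$ do not decay, and there is no a~priori reason the finitely many non-integer coefficients of $b_s$ should be $O(s)$ rather than $O(1)$. The golden-ratio example you build on has $\alpha\approx 0.618$, not small, and it is not at all clear that pushing $s\to 0$ in your quadratic family drives $\alpha\to 0$; you would need to produce an explicit sequence and check. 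The paper sidesteps this by hard-wiring the target $C(x)$ and verifying the decay numerically, at the cost of leaving the limit unproved.
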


\begin{proof}[Observations for Conjecture \ref{conj:Crazy3}]
Let $n \geq 1$.
Let
\[ C(x) = 1 + x^3 + x^{6n+2} + x^{6n+3} + x^{6n+4} + x^{6n+5} + 
x^{6n+6} + x^{12n+6} + x^{12n+9}.\]
Take $A(x,t) = x^3 + t x^2 + t x + 1$.
Write $C(x) = A(x,t) B(x,t) + R(x, t)$ for 
    $R(x, t)$ of degree $2$ with respect to $x$.
Write $R(x, t) = r_2(t) x^2 + r_1(t) x + r_0(t)$.
Write $g(t) = \gcd(r_2(t), r_1(t), r_0(t))$.
Computationally $g$ is non-trivial, and has degree $6n+4$.
Let $t_n$ to the smallest positive root of $g$.
Find the minimal $\alpha_n$ such that 
    the coefficients of $A(x, t_n)$ and $B(x, t_n)$ are in $[-\alpha_n, 1+\alpha_n]$.
Computationally $\alpha_n$ appears to be tending to $0$ as $n \to \infty$.
See Table \ref{tab:alpha}.
\end{proof}

\begin{table}
    \begin{tabular}{l|ll}
    $n$ & $t_n$ & $\alpha_n$ \\ \hline
1&  .037612293& .039963513\\ 
2&  .013407581& .014053745\\ 
3&  .006822026& .007081092\\ 
4&  .004122287& .004250608\\ 
5&  .002757984& .002830563\\ 
6&  .001974008& .002018957\\ 
7&  .001482396& .001512129\\ 
8&  .001153957& .001174632\\
9&  .000923716& .000938668\\
10& .000756095& .000767256 
\end{tabular}
\caption{Details of $\alpha$-unfair polynomials for Conjecture \ref{conj:Crazy3}}
\label{tab:alpha}
\end{table}

\begin{remark}
\label{rmk:alpha}
It appears that $\log(\alpha_n) \approx
 -2.925903281 - 1.871057363 \log(12 n + 9)$.
 This was based on the data for $1 \leq n \leq 40$.
See Figure \ref{fig:Fit}.
\end{remark}

\begin{figure}[ht]
    \centering
    \includegraphics[scale=0.5,angle=270]{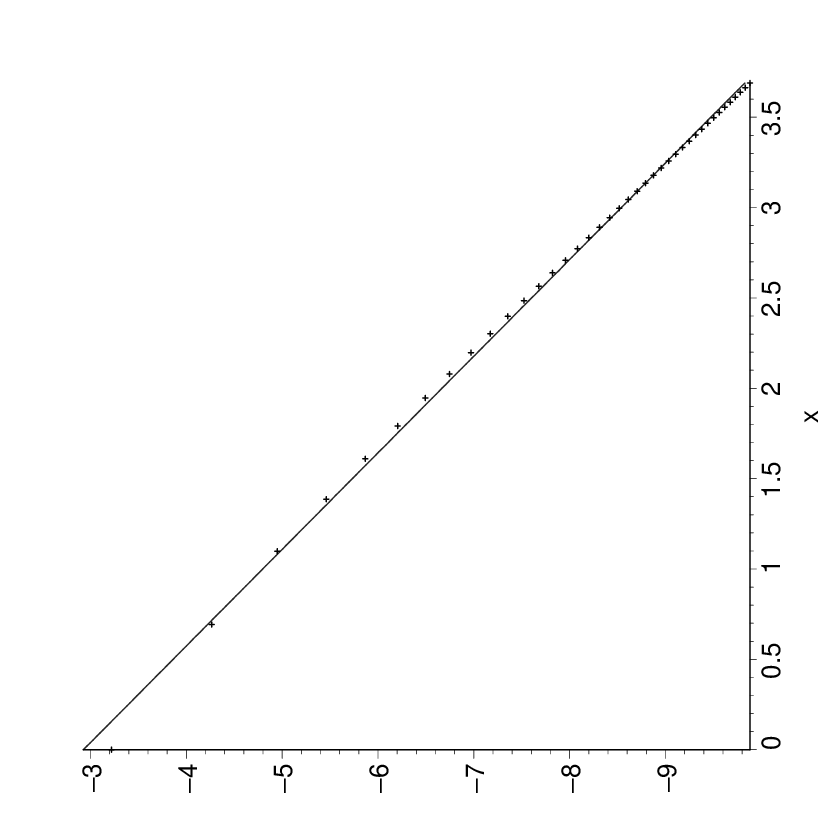}
    \caption{Plot $[\log(12 n + 9),\log(\alpha_n)]$ for $1 \leq n \leq 40$, along with line of best fit for Remark \ref{rmk:alpha}}
    \label{fig:Fit}
\end{figure}

\section{Final Remarks}
\label{sec:conc}

In this paper we computationally searched for unfair 0--1 polynomials with a given potential factor.  Although we were not able to find an example of such a polynomial, we were unable to prove that one does not exist.
In Section \ref{sec:comp} we have provided considerable computational evidence that such polynomials do not exist.
If it is true that a polynomial does not exist, then there appears to be a clear divide between $\alpha$-unfair 0--1 polynomials with $\alpha > 0$ and $0$-unfair 0--1 polynomial.

\section{Acknowledgments}

I would like to thank Juergen Gerhard for informing me 
    of the new Quantifier Elimination package in Maple 2023.

\bibliographystyle{plain}

\begin{thebibliography}{20}
\bibitem{CLO}
David A. Cox , John Little , Donal O’Shea
\newblock {\em Ideals, Varieties, and Algorithms
An Introduction to Computational Algebraic Geometry and Commutative Algebra}
\newblock Springer, 2015

\bibitem{Ghidelli}
Luca Ghidelli
\newblock {\em Progress on the unfair $0-1$ polynomials conjecture using
    linear recurrences and numerical analysis.}
\newblock{arXiv:2209.09843}

\bibitem{HomePage}
Kevin G. Hare
\newblock \url{https://uwaterloo.ca/scholar/kghare/home}
2023

\bibitem{Maple2023}
Maple 2023, Maplesoft, a division of Waterloo Maple Inc., Waterloo Ontario.

\bibitem{Sturm}
Sturm, T. 
\newblock {\em A Survey of Some Methods for Real Quantifier Elimination, Decision, and Satisfiability and Their Applications.}
\newblock Math.Comput.Sci. 11, 483–502 (2017). 

\end{thebibliography}

\end{document}